\documentclass[secthm,seceqn,amsthm,ussrhead,reqno,12pt]{amsart}
\usepackage[utf8]{inputenc}
\usepackage[english]{babel}
\usepackage[symbol]{footmisc}
\usepackage{amssymb,amsmath,amsthm,amsfonts,xcolor,enumerate,hyperref,comment,longtable,cleveref}

\usepackage{times}
\usepackage{cite}
\usepackage{pdflscape}
\usepackage{ulem}
\usepackage[mathcal]{euscript}
\usepackage{tikz}
\usepackage{hyperref}
\usepackage{cancel}
\usepackage{stmaryrd}

\usepackage{amsfonts}
\usepackage{amssymb}
\usepackage{times}
\usepackage{xcolor}
\usepackage{url}
\usepackage{float}



\usepackage{cite}
\usepackage{hyperref}

\usepackage{amsfonts}
\usepackage{amsmath}
\usepackage{eurosym}
\usepackage{geometry}

\usepackage{caption,booktabs}

\captionsetup{
  justification = centering
}

\setcounter{MaxMatrixCols}{12}

\geometry{left=0.75in,right=0.75in,top=1in,bottom=1in}

\theoremstyle{plain}
\newtheorem{theorem}{Theorem}
\newtheorem{lemma}[theorem]{Lemma}

\newtheorem{definition}[theorem]{Definition}
\newtheorem{corollary}[theorem]{Corollary}
\sloppy
\begin{document}
 
 \medskip

\noindent{\Large
Degenerations of noncommutative Heisenberg algebras}\footnote{
The first part of this work is supported by  
FCT   UIDB/00212/2020 and UIDP/00212/2020. 
The second part of this work is supported by the Russian Science Foundation under grant 22-11-00081.  
The authors thank 
Mikhail Ignatyev and Yury Popov for some constructive discussions about the paper.
}

 \medskip

 \medskip

\begin{center}

 {\bf
Ivan Kaygorodov\footnote{CMA-UBI, Universidade da Beira Interior, Covilh\~{a}, Portugal; \  kaygorodov.ivan@gmail.com} \& 
 Yury Volkov\footnote{Saint Petersburg State University, Russia;\  wolf86\_666@list.ru}

}
 
\end{center}

\ 

\noindent {\bf Abstract.}
{\it We give the full description of all degenerations of complex five dimensional noncommutative Heisenberg algebras.
As a corollary, we have the full description of all degenerations of four dimensional anticommutative  $3$-ary algebras.}

\ 

\noindent {\bf Keywords}: 
    {\it Nilpotent algebra,  noncommutative Heisenberg algebra, $n$-ary algebra, degeneration.}

\ 

\noindent {\bf MSC2020}: 17A30, 17A40, 14D06, 14R20. 

 \medskip 

\section*{Introduction}
 
The geometry of varieties of algebras defined by polynomial identities has been an active area of interest and research since the works of Nijenhuis--Richardson~\cite{NR66} and Gabriel~\cite{gabriel} in the 1960's and 1970's. The relationship between geometric features of the variety (such as irreducibility, dimension, and smoothness) and the algebraic properties of its points brings novel geometric insight into the structure of the variety, its generic points and degenerations. Given algebras ${\mathcal A}$ and ${\mathcal B}$ in the same variety, we write ${\mathcal A}\to {\mathcal B}$ and say that ${\mathcal A}$ {\it degenerates} to ${\mathcal B}$, or that ${\mathcal A}$ is a {\it deformation} of ${\mathcal B}$, if ${\mathcal B}$ is in the Zariski closure of the orbit of ${\mathcal A}$ (under the base-change action of the general linear group). The study of degenerations of algebras is very rich and closely related to deformation theory, in the sense of Gerstenhaber \cite{ger63}. Degenerations have also been used to study a level of complexity of an algebra~\cite{gorb93}. There are many results concerning degenerations of algebras of small dimensions in a variety defined by a set of identities (see, for example, \cite{GRH,    BC99, ikp21,  klp20,ckls, fF68,ale3} and references therein). An interesting question is to study those properties which are preserved under degenerations. Recently, Chouhy~\cite{chouhy} proved that in the case of finite-dimensional associative algebras, the $N$-Koszul property is one such property. Concerning Lie algebras, Grunewald--O'Halloran~\cite{GRH} calculated the degenerations for the variety of five-dimensional nilpotent Lie algebras while in~\cite{BC99}, Burde and Steinhoff constructed the graphs of degenerations for the varieties of three and four-dimensional Lie algebras.
Kaygorodov, Lopes and Popov described all degenerations in the variety of five-dimensional associative commutative algebras \cite{klp20}.
Fern\'andez Ouaridi, Kaygorodov, Khrypchenko and Volkov described the full graphs of degenerations of small dimensional nilpotent algebras \cite{fkkv}. 
Alvarez constructed the graph of degenerations of $8$-dimensional $2$-step nilpotent anticommutative algebras \cite{ale3}.
One of the main problems of the {\it geometric classification} of a variety of algebras is a description of its irreducible components. In~\cite{gabriel}, Gabriel described the irreducible components of the variety of four-dimensional unital associative algebras and the variety of five-dimensional unital associative algebras was classified algebraically and geometrically by Mazzola~\cite{maz79}. Later, Cibils~\cite{cibils} considered rigid associative algebras with $2$-step nilpotent radical.   All irreducible components of $2$-step nilpotent (all, commutative and  anticommutative)  algebras have been described in \cite{shaf,ikp21}.

\newpage
In \cite{bbf11} and \cite{ff16} Lie algebras with small dimensional  square  have been considered. 
$2$-step nilpotent Lie algebras are also under consideration \cite{LO14,BDV}.
The main example of these algebras is the one-dimensional central extension of a $2n$-dimensional abelian Lie algebra,
also named as the $(2n+1)$-dimensional Heisenberg algebra.
Many generalizations of Heisenberg Lie algebras are under a certain consideration \cite{lr,mss}.
Let us call a $2$-step nilpotent algebra with one-dimensional square 
a {\it noncommutative Heisenberg  algebra}.
The notion of a noncommutative Heisenberg  algebra appears in  \cite{nhei};
the notion of a Heisenberg Leibniz  (i.e. indecomposable noncommutative Heisenberg)  algebra  appears in \cite{mg21}.

The main aim of the present paper is to study noncommutative Heisenberg algebras, 
which generalize the notion of a Heisenberg Leibniz algebra and of a Heisenberg Lie algebra.
Note that, all noncommutative Heisenberg algebras are associative and Leibniz,
 all anticommutative algebras from this class are Lie.
We describe the system of degenerations of complex five-dimensional noncommutative Heisenberg algebras. 
As a corollary, we have a description of degenerations in the variety of 
complex   four-dimensional anticommutative $3$-ary algebras.

\section*{Noncommutative Heisenberg algebras}
By an “algebra” in this
paper, we mean   simply a finite-dimensional complex vector
space ${\bf V}$ equipped with a “multiplication” given by an arbitrary element
$\mu \in  {\rm Hom}({\bf V}\otimes {\bf V}, {\bf V} ),$ 
i.e. no assumptions such as associativity or commutativity are made on $\mu$.

\subsection*{The geometric classification of    algebras}
Given an $n$-dimensional complex vector space ${\bf V}$, the set ${\rm Hom}({\bf V} \otimes {\bf V},{\bf V}) \cong {\bf V}^* \otimes {\bf V}^* \otimes {\bf V}$ 
is a vector space of dimension $n^3$
(the case of $n$-ary algebras, see in \cite{kv20}). This space inherits the structure of the affine variety $\mathbb{C}^{n^3}.$ 
Indeed, let us fix a basis $e_1,\dots,e_n$ of ${\bf V}$. Then any $\mu\in {\rm Hom}({\bf V} \otimes {\bf V},{\bf V})$ is determined by $n^3$ structure constants $c_{i,j}^k\in\mathbb{C}$ such that
$\mu(e_i\otimes e_j)=\sum_{k=1}^nc_{i,j}^ke_k$. A subset of ${\rm Hom}({\bf V} \otimes {\bf V},{\bf V})$ is {\it Zariski-closed} if it can be defined by a set of polynomial equations in the variables $c_{i,j}^k$ ($1\le i,j,k\le n$).

The general linear group ${\rm GL}({\bf V})$ acts by conjugation on the variety ${\rm Hom}({\bf V} \otimes {\bf V},{\bf V})$ of all algebra structures on ${\bf V}$:
$$ (g * \mu )(x\otimes y) = g\mu(g^{-1}x\otimes g^{-1}y),$$ 
for $x,y\in {\bf V}$, $\mu\in {\rm Hom}({\bf V} \otimes {\bf V},{\bf V})$ and $g\in {\rm GL}({\bf V})$. Clearly, the ${\rm GL}({\bf V})$-orbits correspond to the isomorphism classes of algebras structure on ${\bf V}$. Let $T$ be a set of polynomial identities which is invariant under isomorphism. Then the subset $\mathbb{L}(T)\subset {\rm Hom}({\bf V} \otimes {\bf V},{\bf V})$ of the algebra structures on ${\bf V}$ which satisfy the identities in $T$ is ${\rm GL}({\bf V})$-invariant and Zariski-closed. It follows that $\mathbb{L}(T)$ decomposes into ${\rm GL}({\bf V})$-orbits. The ${\rm GL}({\bf V})$-orbit of $\mu\in\mathbb{L}(T)$ is denoted by $O(\mu)$ and its Zariski closure by $\overline{O(\mu)}$.

Let ${\bf A}$ and ${\bf B}$ be two $n$-dimensional algebras satisfying the identities from $T$ and $\mu,\lambda \in \mathbb{L}(T)$ represent ${\bf A}$ and ${\bf B}$ respectively.
We say that ${\bf A}$ {\it degenerates} to ${\bf B}$ and write ${\bf A}\to {\bf B}$ if $\lambda\in\overline{O(\mu)}$.
Note that in this case we have $\overline{O(\lambda)}\subset\overline{O(\mu)}$. Hence, the definition of degeneration does not depend on the choice of $\mu$ and $\lambda$. It is easy to see that any algebra degenerates to the algebra with zero multiplication. If ${\bf A}\not\cong {\bf B}$, then the assertion ${\bf A}\to {\bf B}$ 
is called a {\it proper degeneration}. We write ${\bf A}\not\to {\bf B}$ if $\lambda\not\in\overline{O(\mu)}$. 

Let ${\bf A}$ be represented by $\mu\in\mathbb{L}(T)$. Then  ${\bf A}$ is  {\it rigid} in $\mathbb{L}(T)$ if $O(\mu)$ is an open subset of $\mathbb{L}(T)$.
Recall that a subset of a variety is called {\it irreducible} if it cannot be represented as a union of two non-trivial closed subsets. A maximal irreducible closed subset of a variety is called an {\it irreducible component}.
It is well known that any affine variety can be represented as a finite union of its irreducible components in a unique way.
The algebra ${\bf A}$ is rigid in $\mathbb{L}(T)$ if and only if $\overline{O(\mu)}$ is an irreducible component of $\mathbb{L}(T)$. 




In the present work we use the methods applied to Lie algebras in \cite{BC99,GRH,GRH2,S90}.
First of all, if ${\bf A}\to {\bf B}$ and ${\bf A}\not\cong {\bf B}$, then $\dim \mathfrak{Der}({\bf A})<\dim \mathfrak{Der}({\bf B})$, where $\mathfrak{Der}({\bf A})$ is the Lie algebra of derivations of ${\bf A}$. We will compute the dimensions of algebras of derivations and will check the assertion ${\bf A}\to {\bf B}$ only for such ${\bf A}$ and ${\bf B}$ that $\dim \mathfrak{Der}({\bf A})<\dim \mathfrak{Der}({\bf B})$. Secondly, 
if ${\bf A}\to {\bf C}$ and ${\bf C}\to {\bf B}$ then ${\bf A}\to{\bf  B}$. If there is no ${\bf C}$ such that ${\bf A}\to {\bf C}$ and ${\bf C}\to {\bf B}$ are proper degenerations, then the assertion ${\bf A}\to {\bf B}$ is called a {\it primary degeneration}. 
If $\dim \mathfrak{Der}({\bf A})<\dim \mathfrak{Der}({\bf B})$ and there are no ${\bf C}$ and ${\bf D}$ such that ${\bf C}\to {\bf A}$, ${\bf B}\to {\bf D}$, ${\bf C}\not\to {\bf D}$ and one of the assertions ${\bf C}\to {\bf A}$ and ${\bf B}\to {\bf D}$ is a proper degeneration,  then the assertion ${\bf A} \not\to {\bf B}$ is called a {\it primary non-degeneration}. It suffices to prove only primary degenerations and non-degenerations to describe degenerations in the variety under consideration. 
%

To prove primary 
degenerations, we will construct families of matrices parametrized by $t$. Namely, let ${\bf A}$ and ${\bf B}$ be two algebras represented by the structures $\mu$ and $\lambda$ from $\mathbb{L}(T)$, respectively. Let $e_1,\dots, e_n$ be a basis of ${\bf V}$ and $c_{i,j}^k$ ($1\le i,j,k\le n$) be the structure constants of $\lambda$ in this basis. If there exist $a_i^j(t)\in\mathbb{C}$ ($1\le i,j\le n$, $t\in\mathbb{C}^*$) such that the elements $E_i^t=\sum_{j=1}^na_i^j(t)e_j$ ($1\le i\le n$) form a basis of ${\bf V}$ for any $t\in\mathbb{C}^*$, and the structure constants $c_{i,j}^k(t)$ of $\mu$ in the basis $E_1^t,\dots, E_n^t$ satisfy $\lim\limits_{t\to 0}c_{i,j}^k(t)=c_{i,j}^k$, then ${\bf A}\to {\bf B}$. In this case  $E_1^t,\dots, E_n^t$ is called a {\it parametric basis} for ${\bf A}\to {\bf B}$.

To prove primary non-degenerations we will use the following lemma (see \cite{GRH}).

\begin{lemma} 
Let $\mathcal{B}$ be a Borel subgroup of ${\rm GL}({\bf V})$ and $\mathcal{R}\subset \mathbb{L}(T)$ be a $\mathcal{B}$-stable closed subset.
If ${\bf A} \to {\bf B}$ and ${\bf A}$ can be represented by $\mu\in\mathcal{R}$ then there is $\lambda\in \mathcal{R}$ that represents ${\bf B}$.
\end{lemma}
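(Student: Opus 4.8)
\medskip

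\noindent\textbf{Proof proposal.} Write $G={\rm GL}({\bf V})$. The plan is to reduce the statement to the single fact that $G\cdot\mathcal{R}$ is Zariski-closed, and then to deduce that fact from the completeness of the flag variety $G/\mathcal{B}$. Unravelling the hypothesis ${\bf A}\to{\bf B}$, one fixes a structure $\lambda_0$ representing ${\bf B}$ with $\lambda_0\in\overline{O(\mu)}$, and the goal is to exhibit some $\lambda\in\mathcal{R}$ lying in the orbit $O(\lambda_0)$, since such a $\lambda$ then represents ${\bf B}$. Because $\mu\in\mathcal{R}$ we have $O(\mu)=G\cdot\mu\subseteq G\cdot\mathcal{R}$; hence, once $G\cdot\mathcal{R}$ is known to be closed, $\lambda_0\in\overline{O(\mu)}\subseteq G\cdot\mathcal{R}$, so $\lambda_0=g\cdot\lambda$ for some $g\in G$ and $\lambda\in\mathcal{R}$, and this $\lambda\in\mathcal{R}\cap O(\lambda_0)$ is precisely what is wanted.

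The remaining task, and the only place where the Borel subgroup is used, is to show that $G\cdot\mathcal{R}$ is closed. I would consider the morphism $\Phi\colon G\times\mathbb{L}(T)\to(G/\mathcal{B})\times\mathbb{L}(T)$ defined by $\Phi(g,\nu)=(g\mathcal{B},\,g\cdot\nu)$. Pre-composing with the isomorphism $(g,\nu)\mapsto(g,g\cdot\nu)$ of $G\times\mathbb{L}(T)$ identifies $\Phi$ with the projection $(h,w)\mapsto(h\mathcal{B},w)$; as $G\to G/\mathcal{B}$ is an open surjection, this projection is a topological quotient map and therefore sends closed subsets that are unions of its fibres to closed subsets. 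Now $G\times\mathcal{R}$ is closed in $G\times\mathbb{L}(T)$, and since $\mathcal{R}$ is $\mathcal{B}$-stable its image $\{(g,g\cdot\rho):g\in G,\ \rho\in\mathcal{R}\}$ under the above isomorphism is a union of fibres of the projection; hence $Z:=\Phi(G\times\mathcal{R})=\{(g\mathcal{B},\,g\cdot\rho):g\in G,\ \rho\in\mathcal{R}\}$ is closed. Finally, $G/\mathcal{B}$ is a projective, hence complete, variety, so the projection $p\colon(G/\mathcal{B})\times\mathbb{L}(T)\to\mathbb{L}(T)$ is a closed map, and $p(Z)=G\cdot\mathcal{R}$ is therefore closed in $\mathbb{L}(T)$.

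Combining the two steps yields the lemma. The one point that genuinely demands care is the passage from a closed, union-of-fibres subset to its (closed) image under $\Phi$ — equivalently, the standard fact that $G\to G/\mathcal{B}$ is a well-behaved (open, in fact Zariski-locally trivial) quotient, so that closedness may be checked fibre by fibre; granting this, the remainder is formal, resting only on the completeness of the flag variety together with the elementary fact that the projection from a product with a complete variety onto the other factor is a closed map. I expect this bundle-theoretic bookkeeping to be the main obstacle, with the rest of the argument being routine.
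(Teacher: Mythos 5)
Your argument is correct, and it is the standard one: the paper itself gives no proof of this lemma, merely citing Grunewald--O'Halloran \cite{GRH}, and the proof there (as in Borel's or Humphreys' treatment of the fact that $G\cdot\mathcal{R}$ is closed when $\mathcal{R}$ is closed and stable under a parabolic subgroup) is essentially what you wrote: identify $\Phi(G\times\mathcal{R})$ with the closed saturated set $\{(g\mathcal{B},w): g^{-1}\cdot w\in\mathcal{R}\}$ and push it down along the projection from the complete variety $G/\mathcal{B}$. The one step you flag as delicate is indeed the only one needing justification beyond point-set topology, and your justification is adequate; if you want it airtight, note that $G\to G/\mathcal{B}$ is Zariski-locally trivial (or simply flat and surjective, hence open, with $\pi\times\mathrm{id}$ again open by flatness), so it is a quotient map and images of saturated closed sets are closed. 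Your concluding deduction $\overline{O(\mu)}\subseteq G\cdot\mathcal{R}$, hence $\lambda_0\in G\cdot\mathcal{R}$ and $\lambda=g^{-1}\cdot\lambda_0\in\mathcal{R}\cap O(\lambda_0)$, is exactly right.
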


Each time when we will need to prove some primary non-degeneration $\mu\not\to\lambda$, we will define $\mathcal{R}$ by a set of polynomial equations in structure constants $c_{i,j}^k$ and write some basis $f_1,\dots,f_n$ after the definition of $\mathcal{R}$ in such a way that the structure constants of $\mu$ in the basis $f_1,\dots,f_n$ satisfy these equations. We will omit everywhere the verification of the fact that $\mathcal{R}$ is stable under the action of the subgroup of lower triangular matrices and of the fact that $\lambda\not\in\mathcal{R}$ for any choice of a basis of ${\bf V}.$ For example, if, for $\xi\in\mathbb{C}$ and an algebra ${\bf A}$, we define 
\begin{center}$Z_{\xi}({\bf A})= \{ x\in {\bf A}:  xy=\xi yx, \forall y \in {\bf A} \},$
\end{center}
then ${\bf A} \to {\bf B}$ will imply $Z_{\xi}({\bf A})\le Z_{\xi}({\bf B})$. Here and further we will write $U$ instead of ${\rm dim}_{\mathbb{C}}U$ if it does not cause any confusion.

\subsection*{Noncommutative Heisenberg algebras, anticommutative $n$-ary algebras and matrices} 
 
\begin{definition}
An algebra $\mathfrak{H}$ is a noncommutative Heisenberg algebra if ${\rm dim} \ \mathfrak{H}^2 \leq 1$ and
$\mathfrak{H}^2\mathfrak{H}+\mathfrak{H}\mathfrak{H}^2=0.$ 
\end{definition}

Let now $\mathfrak{H}$ be an $(n+1)$-dimensional noncommutative Heisenberg algebra. 
There is  a basis $\{e_j\}_{1\leq i \leq n+1}$ of $\mathfrak{H}$ such that 
the multiplication table of $\mathfrak{H}$ is $e_ie_j=\alpha_{ij} e_{n+1}$ for $1 \leq i,j\leq n.$
In this way we may construct an $n\times n$ matrix from a noncommutative Heisenberg algebra and vice versa. Two matrices correspond to isomorphic algebras if and only if they are congruent.
Following \cite{fil85}, we will explain below how construct a (one-t-one) correspondence between $n\times n$ matrices and $n$-dimensional anticommutative  $(n-1)$-ary  algebras such that two algebras are isomorphic if and only if the corresponding matrices are congruent. Thus, the variety of $n\times n$ matrices is isomorphic to the variety of $n$-dimensional anticommutative  $(n-1)$-ary  algebras and the variety of $(n+1)$-dimensional noncommutative Heisenberg algebras has a closed subvariety isomorphic to the variety of $n\times n$ matrices that intersects each orbit under the action of the general linear group. This automatically gives an equivalence between the algebraic classification of $n$-dimensional anticommutative  $(n-1)$-ary  algebras, the algebraic classification of $(n+1)$-dimensional noncommutative Heisenberg algebras and the classification of $n\times n$ matrices up to congruence. Moreover, it is explained in \cite[Section 4]{IKV} that this equivalence respects orbit closures, i.e. the algebraic classification and degenerations of $(n+1)$-dimensional noncommutative Heisenberg algebras that we will give in the present paper give automatically  the algebraic classification and degenerations of $n$-dimensional anticommutative  $(n-1)$-ary  algebras and of $n\times n$ matrices up to congruence.

\subsection*{The algebraic classification  of five-dimensional noncommutative Heisenberg  algebras}
According to the method of classification of $n \times n$ matrices under congruence \cite{hs08},  
there are only $16$ types of $4\times 4$ matrices given below.

\begin{longtable}{cccc}
$\begin{pmatrix}
0&0&0&0\\
0&0&0&0\\
0&0&0&0\\
0&0&0&0\\
\end{pmatrix}$,&

$\begin{pmatrix}
1&0&0&0\\
0&0&0&0\\
0&0&0&0\\
0&0&0&0\\
\end{pmatrix}$,&

$\begin{pmatrix}
0&-1&0&0\\
1&1&0&0\\
0&0&0&0\\
0&0&0&0\\
\end{pmatrix}$,&

$\begin{pmatrix}
0&1&0&0\\
\lambda &0&0&0\\
0&0&0&0\\
0&0&0&0\\
\end{pmatrix}$,\\

$\begin{pmatrix}
1&0&0&0\\
0 &0&-1&0\\
0&1&1&0\\
0&0&0&0\\
\end{pmatrix}$,&

$\begin{pmatrix}
1&0&0&0\\
0 &0&1&0\\
0&\lambda&0&0\\
0&0&0&0\\
\end{pmatrix}$,&

$\begin{pmatrix}
0&1&0&0\\
0&0&1&0\\
0&0&0&0\\
0&0&0&0\\
\end{pmatrix}$,&

$\begin{pmatrix}
0&0&1&0\\
0&-1&-1&0\\
1&1&0&0\\
0&0&0&0\\
\end{pmatrix}$,\\

$\begin{pmatrix}0&0&0&-1\\
0&0&1&1\\
0&-1&-1&0\\
1&1&0&0\\
\end{pmatrix}$,&

 $\begin{pmatrix}0&0&1&0\\
0&0&0&1\\
\lambda&1&0&0\\
0&\lambda&0&0\\
\end{pmatrix} ({\lambda \neq -1,0})$,&

$\begin{pmatrix}1&0&0&0\\
0&0&1&0\\
0&0&0&1\\
0&0&0&0\\
\end{pmatrix}$,&

$\begin{pmatrix}1&0&0&0\\
0&0&0&1\\
0&0&-1&-1\\
0&1&1&0\\
\end{pmatrix}$,\\

$\begin{pmatrix}0&-1&0&0\\
1&1&0&0\\
0&0&0&-1\\
0&0&1&1\\
\end{pmatrix}$,&

$\begin{pmatrix}0&1&0&0\\
\mu&0&0&0\\
0&0&0&1\\
0&0&\lambda&0\\
\end{pmatrix}$,&

 $\begin{pmatrix}0&1&0&0\\
\lambda&0&0&0\\
0&0&0&-1\\
0&0&1&1\\
\end{pmatrix}$,&

$\begin{pmatrix}0&1&0&0\\
0&0&1&0\\
0&0&0&1\\
0&0&0&0\\
\end{pmatrix}$.
    \end{longtable}
Here  $\lambda$, $\mu$ are determined up to replacement by $\lambda^{-1}$, $\mu^{-1}$ respectively
and up to interchanging $\lambda$ and $\mu$. To omit these coincidences let us introduce the sets
$$
\mathbb{C}_{|\cdot|\le 1}=\big\{ x\in\mathbb{C}:  |x|<1 \big\} \ \cup \ \big\{x\in\mathbb{C}: |x|=1 \mbox{ and } {\rm Im}(x)\geq 0 \big\}
$$
and 
$$
\mathbb{C}_{\ge 0}=\big\{ x\in\mathbb{C}:  {\rm Re}(x)>0 \big\} \ \cup \ \big\{x\in\mathbb{C}: {\rm Re}(x)=0 \mbox{ and } {\rm Im}(x)\geq 0 \big\}.
$$
Then putting the restrictions $\lambda,\mu\in\mathbb{C}_{|\cdot|\le 1}$ and $\lambda-\mu\in \mathbb{C}_{\ge 0}$ on the matrices above we get exactly one representative for each congruence class.
Hence, we obtain the following classification of
nonzero five-dimensional noncommutative Heisenberg algebras.

\begin{theorem}
Let $\mathfrak{H}$ be a nonzero five-dimensional   noncommutative Heisenberg algebra.
Then 
  $\mathfrak{H}$ is isomorphic to exactly one algebra from the following list:

\begin{longtable}{l|l| lllllll}
\hline
$\mathfrak{H}$ & $\mathfrak{Der} \ \mathfrak{H}$ &  \multicolumn{7}{l}{{Multiplication table}}  \\
\hline

$\mathfrak{H}_{01}$ & $17$& $e_1e_1 = e_5$ \\ 
\hline
$\mathfrak{H}_{02}$ & $14$ &$e_1e_2 = -e_5$ & $e_2e_1=e_5$ & $e_2e_2=e_5$\\ 
\hline
$\mathfrak{H}_{03}^\lambda$ & $14+2\delta_{\lambda,-1} $& $e_1e_2 = e_5$ & $e_2e_1=\lambda e_5$\\ 
\hline
$\mathfrak{H}_{04}$ &$10$ & $e_1e_1 = e_5$&  $e_2e_3=-e_5$ & $e_3e_2=e_5$& $e_3e_3=e_5$\\ 
\hline
$\mathfrak{H}_{05}^\lambda$ &$10+2\delta_{\lambda,1}+2\delta_{\lambda,- 1}$& $e_1e_1 = e_5$ & $e_2e_3=  e_5$ & $ e_3e_2=\lambda e_5$\\ 
\hline
$\mathfrak{H}_{06}$ &$11$& $e_1e_2 = e_5$ & $e_2e_3=e_5$ \\ 
\hline
$\mathfrak{H}_{07}$ &$10$& $e_1e_3 = e_5$ & $e_2e_2=-e_5$ & $e_2e_3=-e_5$\\
&& $e_3e_1=e_5$ &  $e_3e_2=e_5$\\

\hline
$\mathfrak{H}_{08}$ &$7$&

$e_1e_4 = -e_5$ & $e_2e_3=e_5$ & $e_2e_4=e_5$ & $e_3e_2=-e_5$ \\
&& $e_3e_3=-e_5$  & $e_4e_1=e_5$ & $e_4e_2=e_5$\\

\hline
$\mathfrak{H}_{09}^{\lambda\neq-1,0}$ &$7  + 2\delta_{\lambda,1}$&

$e_1e_3=e_5$ & $e_2e_4=e_5$ & $e_3e_1=\lambda e_5$ \\&& $e_3e_2=e_5$ & $e_4e_2=\lambda e_5$\\

\hline
$\mathfrak{H}_{10}$ &$8$&

$e_1e_1=e_5$ & $e_2e_3=e_5$ & $e_3e_4=e_5$\\

\hline
$\mathfrak{H}_{11}$ &$7$&

$e_1e_1=e_5$ & $e_2e_4=e_5$ & $e_3e_3=-e_5$ \\&& $e_3e_4=-e_5$ & $e_4e_2=e_5$ & $e_4e_3=e_5$\\

\hline
$\mathfrak{H}_{12}$ &$9$&

$e_1e_2=-e_5$ & $e_2e_1=e_5$ & $e_2e_2=e_5$ \\&& $e_3e_4=-e_5$ & $e_4e_3=e_5$ & $e_4e_4=e_5$\\

\hline
$\mathfrak{H}_{13}^{\lambda;\mu}$ & 
$7+\theta_{\lambda,\mu}$ &

$e_1e_2=e_5$ & $e_2e_1=\mu e_5$ & $e_3e_4=e_5$ & $e_4e_3=\lambda e_5$\\

\hline
$\mathfrak{H}_{14}^{\lambda}$ &$7+4\delta_{\lambda,-1}$&

$e_1e_2=e_5$ & $e_2e_1=\lambda e_5$ & $e_3e_4=-e_5$ \\
&&  $e_4e_3=e_5$ & $e_4e_4=e_5$\\

\hline
$\mathfrak{H}_{15}$ &$7$&

$e_1e_2=e_5$ & $ e_2e_3=e_5$ & $e_3e_4=e_5$\\
\hline

\end{longtable}
 
where $\lambda,\mu\in\mathbb{C}_{|\cdot|\le 1}$ and $\lambda-\mu\in \mathbb{C}_{\ge 0}$ and
\begin{longtable}{lcl}
$\theta_{\lambda,\mu}$ & $=$ & $2 \delta_{\lambda,\mu}+2\delta_{\lambda,-1}\delta_{\mu,-1}+2\delta_{\lambda,1}\delta_{\mu,1}+2\delta_{\lambda,-1}+2\delta_{\mu,-1}. $
\end{longtable}

\end{theorem}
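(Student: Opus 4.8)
The plan is to invoke the correspondence established in the preceding paragraph: the algebraic classification of five-dimensional noncommutative Heisenberg algebras is equivalent to the classification of $4\times 4$ complex matrices up to congruence. So first I would recall from \cite{hs08} the canonical form theory for matrices under congruence (equivalently, the classification of pairs consisting of the symmetric and skew-symmetric parts, or the theory of $*$-congruence via Kronecker-type canonical blocks), which produces exactly the $16$ block types listed above. Each indecomposable congruence class of small size is a Jordan-like block of one of a few families (a $1\times 1$ nonzero scalar, a $2\times 2$ block carrying a parameter $\lambda$, a nilpotent Jordan block for the pencil, and the "type $\mathrm{H}$" blocks built from a Jordan block and its transpose), and a $4\times 4$ matrix is a direct sum of such indecomposables; enumerating the partitions of $4$ into these block sizes and combining with the possible block types yields precisely the displayed list of $16$ matrices.

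Next I would translate each matrix $A=(\alpha_{ij})$ into a noncommutative Heisenberg algebra via $e_ie_j=\alpha_{ij}e_5$ for $1\le i,j\le 4$, reading off the multiplication tables $\mathfrak{H}_{01},\dots,\mathfrak{H}_{15}$ exactly as recorded in the table. Since two such algebras are isomorphic iff the matrices are congruent, the fact that the $16$ canonical matrices are pairwise non-congruent gives that the $\mathfrak{H}$'s are pairwise non-isomorphic — except that the parameters $\lambda$ (and the pair $\lambda,\mu$) in the parametric families are only determined up to the equivalences $\lambda\leftrightarrow\lambda^{-1}$, $\mu\leftrightarrow\mu^{-1}$, and $\lambda\leftrightarrow\mu$ coming from congruence of the corresponding blocks (transpose of a block realizes $\lambda\mapsto\lambda^{-1}$, and reordering two blocks swaps $\lambda$ and $\mu$). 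Introducing the fundamental domains $\mathbb{C}_{|\cdot|\le 1}$ and $\mathbb{C}_{\ge 0}$ and imposing $\lambda,\mu\in\mathbb{C}_{|\cdot|\le 1}$, $\lambda-\mu\in\mathbb{C}_{\ge 0}$ picks exactly one representative from each such equivalence class, so the list becomes irredundant: every nonzero five-dimensional noncommutative Heisenberg algebra is isomorphic to exactly one algebra in the table.

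Finally I would compute $\dim\mathfrak{Der}\,\mathfrak{H}$ for each algebra. Because $\mathfrak{H}$ is $2$-step nilpotent with $\mathfrak{H}^2=\langle e_5\rangle$ one-dimensional, a derivation $D$ preserves $\mathfrak{H}^2$, so $De_5=ce_5$ for a scalar $c$, and on the quotient $\mathfrak{H}/\mathfrak{H}^2\cong\mathbb{C}^4$ it acts by a matrix $M$; the derivation condition $D(e_ie_j)=(De_i)e_j+e_i(De_j)$ becomes the linear-algebra condition $M^{\mathsf T}A+AM=cA$ on $(M,c)$, together with the freedom of an arbitrary linear map $\mathbb{C}^4\to\langle e_5\rangle$ (which contributes $4$ to the dimension since the relevant component is unconstrained here). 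Thus $\dim\mathfrak{Der}\,\mathfrak{H}=4+\dim\{(M,c): M^{\mathsf T}A+AM=cA\}$, and this is a routine rank computation for each of the $16$ canonical $A$, yielding the numbers in the second column, with the Kronecker $\delta$ corrections at the special parameter values ($\lambda=\pm1$, $\lambda=\mu$, etc.) accounting for the extra symmetry of $A$ there. I expect the main obstacle to be purely bookkeeping: carefully verifying the congruence canonical forms from \cite{hs08} and checking that the proposed fundamental domain genuinely gives exactly one representative per class (in particular handling the boundary cases $|\lambda|=1$ and $\mathrm{Re}(\lambda)=0$ correctly), rather than any conceptual difficulty.
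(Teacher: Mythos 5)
Your proposal is correct and follows essentially the same route as the paper: the authors likewise reduce the classification to the Horn--Sergeichuk canonical forms of $4\times 4$ matrices under congruence, list the $16$ types, remove the redundancy $\lambda\mapsto\lambda^{-1}$, $\mu\mapsto\mu^{-1}$, $\lambda\leftrightarrow\mu$ via the fundamental domains $\mathbb{C}_{|\cdot|\le 1}$ and $\mathbb{C}_{\ge 0}$, and read off the algebras $e_ie_j=\alpha_{ij}e_5$. Your derivation count $\dim\mathfrak{Der}\,\mathfrak{H}=4+\dim\{(M,c): M^{\mathsf T}A+AM=cA\}$ is the correct way to obtain the second column (the paper omits this computation), the $4$ coming from the unconstrained maps into the one-dimensional square.
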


\subsection*{Degenerations of five-dimensional noncommutative Heisenberg algebras}
\begin{theorem}\label{theorem}

  The graph of degenerations of the variety of complex five-dimensional noncommutative Heisenberg algebras is the following:

\medskip 

\begin{center}

	\begin{tikzpicture}[->,>=stealth,shorten >=0.05cm,auto,node distance=1.3cm,
	thick,main node/.style={rectangle,draw,fill=gray!10,rounded corners=1.5ex,font=\sffamily \scriptsize \bfseries },rigid node/.style={rectangle,draw,fill=black!20,rounded corners=1.5ex,font=\sffamily \scriptsize \bfseries },style={draw,font=\sffamily \scriptsize \bfseries }]

	\node (7) at (0,11) {$7$};
	\node (8) at (0,10) {$8$};
	\node (9) at (0,9) {$9$};
	\node (10) at (0,8) {$10$};
	\node (11)  at (0,7) {$11$};
	\node (12)  at (0,6) {$12$};
	\node (14)  at (0,5) {$14$};
	\node (15)  at (0,4) {$15$};
	\node (16)  at (0,3) {$16$};
	\node (17)  at (0,2) {$17$};
	\node (25)  at (0,1) {$25$};

	\node[main node] (h08) at (-14,11) {$\mathfrak{H}_{08}$};
	\node[main node] (h14) at (-11,11) {$\mathfrak{H}_{14}^{\lambda\neq-1}$};
	\node[rigid node] (h13) at (-9,11) {$\mathfrak{H}_{13}^{\lambda\neq-1;\mu\neq-1,\lambda}$};
	\node[main node]  (h09) at (-7,11) {$\mathfrak{H}_{09}^{\lambda\neq -1,0,1 }$};
	\node[main node] (h15) at (-5,11) {$\mathfrak{H}_{15}$};
    \node[main node] (h11) at (-2,11) {$\mathfrak{H}_{11}$};

	\node[main node] (h10) at (-8,10) {$\mathfrak{H}_{10}$};

	\node (h12)[main node] at (-14,9) {$\mathfrak{H}_{12}$};
	\node (h13l-1)[main node] at (-12,9) {$\mathfrak{H}_{13}^{\tau\neq-1; -1}$};
	\node (h13ll)[main node] at (-4,9) {$\mathfrak{H}_{13}^{\tau; \tau\neq-1}$};
	\node (h091)[main node] at (-2,9) {$\mathfrak{H}_{09}^1$};
	
	\node[main node] (h04) at (-12,8) {$\mathfrak{H}_{04}$};
	\node[main node] (h07) at (-4,8) {$\mathfrak{H}_{07}$};
	\node[main node] (h05) at (-8,8) {$\mathfrak{H}_{05}^{\lambda\neq-1}$};
	
	\node[main node] (h14-1) at (-14,7) {$\mathfrak{H}_{14}^{-1}$};
	\node[main node] (h06) at (-8,7) {$\mathfrak{H}_{06}$};
	\node[main node] (h1311) at (-2,7) {$\mathfrak{H}_{13}^{1;1}$};

	\node[main node] (h051) at (-4,6) {$\mathfrak{H}_{05}^1$};
	\node[main node] (h05-1) at (-12,6) {$\mathfrak{H}_{05}^{-1}$};

	\node[main node] (h02) at (-9,5) {$\mathfrak{H}_{02}$};
	\node[main node] (h03) at (-7,5) {$\mathfrak{H}_{03}^{\lambda\neq-1}$};

	\node[main node] (h13-1-1) at (-11,4) {$\mathfrak{H}_{13}^{-1;-1}$};

	\node[main node] (h03-1) at (-10,3) {$\mathfrak{H}_{03}^{-1}$};
	
	\node[main node] (h01) at (-6,2) {$\mathfrak{H}_{01}$};
	
	\node[main node] (CC)  at (-8,1) {$\mathbb{C}^5$};
	
	\path[every node/.style={font=\sffamily\small}]

 (h01) edge   node[left] {} (CC)

 (h02) edge   node[left] {} (h01)
 (h02) edge   node[left] {} (h03-1)

 (h03) edge   node[left] {} (h01)
   
 (h03-1) edge   node[left] {} (CC)

 (h04) edge [bend right=0]  node[left] {} (h05-1)
 (h04) edge node[above]{}   node{} (h06)

(h05) edge   node{} (h06)

(h051) edge   node[above=3, left=-16, fill=white]{\tiny  $\lambda= 1$}   node{} (h03)

 (h05-1) edge   node[left]{ } (h02)
 
 (h06) edge   node[left] {} (h02)
 (h06) edge   node[left] {} (h03)

 (h07) edge   node[left] {} (h06)
 (h07) edge  [bend right=0]  node[left] {} (h051)

 (h08) edge   node[left] {} (h12)
 (h08) edge  node[left] {} (h10)
 
 (h091) edge node[left] {} (h07)
 (h091) edge node[left] {} (h1311)
 
  (h09) edge node[left] {} (h10)
  (h09) edge [bend right=00] node[above=-7, left=-27, fill=white]{\tiny  $\lambda=\tau$} node{} (h13ll)

 (h10) edge   [bend left=0]   node[left] {} (h04)
 (h10) edge    node[left] {} (h05)
 (h10) edge   [bend left=0]  node[left] {} (h07)

(h11) edge   node[left] {} (h10)
(h11) edge   node[left] {} (h091)

 (h12) edge   node[left] {} (h04)
 (h12) edge   node[left] {} (h14-1)

 (h1311) edge   node[left] {} (h051)
 
 (h13l-1) edge node[left] {} (h04)

 (h13) edge  node[left] {} (h10)

 (h13ll) edge  node[above=0, left=-4,  fill=white]{\tiny  $\tau= \lambda$} (h05)

 (h13-1-1) edge   node[left] {} (h03-1)

 (h14) edge   node[left] {} (h10)
 (h14) edge node[above=0, left=-13, fill=white]{\tiny  $\lambda=\tau$} node{} (h13l-1)

 (h14-1) edge   node[left]{ } (h05-1)
 (h14-1) edge   node[left] {} (h13-1-1)

 (h15) edge node[left] {} (h10)
  (h15) edge node[above=2, left=-17, fill=white]{\tiny  $\tau= 0$} (h13ll)
 
    ;
   
	\end{tikzpicture}
	
\end{center}

\end{theorem}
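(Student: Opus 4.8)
The plan is to establish the degeneration graph in the usual two-step fashion: first verify all claimed (primary) degenerations by exhibiting explicit parametric bases, and then verify all claimed non-degenerations using the invariant-and-Borel-subgroup technique encapsulated in the Lemma. Since the variety of five-dimensional noncommutative Heisenberg algebras is (after the reduction recalled above) isomorphic to the variety of $4\times 4$ matrices up to congruence, I would work entirely in that matrix picture: an algebra $\mathfrak H$ with table $e_ie_j=\alpha_{ij}e_5$ corresponds to the matrix $A=(\alpha_{ij})$, a change of basis $E_i=\sum_j a_i^je_j$ (fixing $e_5$ up to scalar, which only rescales $A$) acts by $A\mapsto c\,gAg^{\mathsf T}$, and $\mathfrak H_{\mu}\to\mathfrak H_{\lambda}$ iff the congruence-plus-scaling orbit closure of $A_\mu$ contains $A_\lambda$. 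This makes the parametric families short to write down and lets me reuse the known stratification of $4\times 4$ matrices under congruence (the $16$ types above).

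First I would compute $\dim\mathfrak{Der}(\mathfrak H)$ for every algebra in the classification — these are already recorded in the theorem's table — and use the necessary condition $\dim\mathfrak{Der}(\mathbf A)<\dim\mathfrak{Der}(\mathbf B)$ together with transitivity of $\to$ to cut the list of pairs down to the candidate primary degenerations appearing as edges in the displayed graph. For each such edge I would then produce a parametric basis: concretely, for a congruence degeneration $A\to B$ one looks for $g(t)\in\mathrm{GL}_4(\mathbb C)$ and $c(t)\in\mathbb C^*$ with $\lim_{t\to 0} c(t)\,g(t)\,A\,g(t)^{\mathsf T}=B$. Typical building blocks are diagonal scalings $g(t)=\mathrm{diag}(t^{a_1},\dots,t^{a_4})$ combined with a fixed unipotent correction, and for the one-parameter families $\mathfrak H_{03}^\lambda,\mathfrak H_{05}^\lambda,\mathfrak H_{09}^\lambda,\mathfrak H_{13}^{\lambda;\mu},\mathfrak H_{14}^\lambda$ one also allows $\lambda$ (or $\mu$) to vary with $t$, which is exactly what the edge labels $\lambda=\tau$, $\tau=0$, $\lambda=1$ in the picture indicate. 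I would organize these into a table, one line per edge, exhibiting the matrix family and the limit, and note that composites then give all the non-drawn degenerations by transitivity.

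For the non-degenerations it suffices, as explained in the excerpt, to rule out the primary ones. The tool is the Lemma: for each pair $\mathbf A\not\to\mathbf B$ I would exhibit a $\mathcal B$-stable closed subset $\mathcal R\subseteq\mathbb L(T)$ (equivalently, a congruence-and-lower-triangular-stable closed condition on the $4\times 4$ matrix) that contains a representative of $\mathbf A$ but no representative of $\mathbf B$. The standard invariants available here are the spaces $Z_\xi(\mathbf A)=\{x: xy=\xi yx\ \forall y\}$ for $\xi\in\mathbb C$ (whose dimensions are upper-semicontinuous along degenerations), the dimension of the annihilator $\mathrm{Ann}(\mathbf A)$, dimensions of $\{x: x\mathfrak H=0\}$ and $\{x:\mathfrak Hx=0\}$ separately, and ranks of the symmetric and antisymmetric parts of $A$ (which are exactly congruence invariants surviving to the closure when phrased as "rank $\le k$"), plus refinements cut out by vanishing of suitable minors/quadratic forms in the structure constants. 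For each edge of the complement of the graph I would record such an $\mathcal R$ and the witnessing basis, leaving the routine checks of $\mathcal B$-stability and of $\lambda\notin\mathcal R$ to the reader as announced.

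The main obstacle I expect is the sheer combinatorial bookkeeping together with a handful of genuinely delicate pairs: algebras with equal derivation dimension that are nonetheless incomparable, and the behaviour at the special parameter values $\lambda=-1,0,1$ and $\mu=-1$ where the derivation dimension jumps (the $\delta$ and $\theta$ corrections in the table) and where families can collide. For those borderline pairs the crude invariants ($\dim\mathfrak{Der}$, $\dim Z_\xi$, ranks) will typically not separate the orbits, and I will need to design a more bespoke $\mathcal R$ — for instance one built from the precise Jordan-type data of the pencil $A+sA^{\mathsf T}$ or from the vanishing of a specific GL-semiinvariant — which is the creative part of the argument. Once those are handled, closing the graph is a finite check: every pair with strictly smaller derivation dimension is either realized by one of the tabulated parametric families (possibly via a composite) or excluded by one of the tabulated $\mathcal R$'s, and these two lists are complementary by construction.
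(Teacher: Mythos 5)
Your strategy coincides with the paper's: explicit parametric bases (equivalently, families $c(t)\,g(t)Ag(t)^{\mathsf T}$ in the matrix picture) for each primary degeneration, derivation dimensions to restrict to primary pairs, and Borel-stable closed sets $\mathcal R$ together with the semicontinuous invariants $Z_{\xi}$ and (anti)symmetry of the form for the primary non-degenerations — the paper's delicate cases are indeed handled by $\mathcal R$'s of the kind you anticipate, cut out by relations such as $c^5_{4,1}-\lambda c^5_{1,4}=c^5_{3,2}-\mu c^5_{2,3}=0$ encoding the congruence data of the pair $(A,A^{\mathsf T})$. However, as written your text is only a plan: for a theorem whose proof \emph{is} the finite list of roughly thirty parametric bases and a dozen witnessing sets $\mathcal R$, deferring all of that data means no part of the statement has actually been verified, so the proposal must be completed by producing the two tables before it counts as a proof.
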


\begin{proof}
All primary degenerations are proved in the following table:
\begin{longtable}{lll lll}
\hline $\mathfrak{H}_{02}$ &$\to$ & $\mathfrak{H}_{01}$&
$E^t_1=  e_2  $&
$E^t_2=   t e_1 $& 
$E^t_3= e_3 $\\
\multicolumn{3}{l}{ }&$E^t_4= e_4 $&
$E^t_5= e_5$\\

\hline $\mathfrak{H}_{02}$ &$\to$ & $\mathfrak{H}_{03}^{-1}$&
$E^t_1=  \frac{1}{t}e_1  $&
$E^t_2=  -t e_2 $& 
$E^t_3= e_3 $\\
&&&$E^t_4= e_4 $&
$E^t_5= e_5$\\

\hline $\mathfrak{H}_{03}^{\lambda \neq -1}$ &$\to$ & $\mathfrak{H}_{01}$&
$E^t_1= e_1 + \frac{1}{(1 + \lambda) }e_2 $&
$E^t_2= te_2 $&
$E^t_3= e_3 $\\
&&&$E^t_4= e_4 $& 
$E^t_5= e_5 $ \\

\hline $\mathfrak{H}_{04}$ &$\to$ & $\mathfrak{H}_{05}^{-1}$&
$E^t_1= e_1 $&
$E^t_2= \frac{1}{t}e_2 $ &$E^t_3= -te_3 $\\&&&
$E^t_4= e_4 $& 
$E^t_5= e_5 $ \\

\hline $\mathfrak{H}_{04}$ &$\to$ & $\mathfrak{H}_{06}$&
$E^t_1= te_1+ite_2 $&
\multicolumn{2}{l}{$E^t_2= \frac{1}{2t}e_1+ \frac{i}{2t}e_3$}\\
&&&
$E^t_3= te_1-ite_2 $&
$E^t_4= e_4 $& 
$E^t_5= e_5 $ \\

\hline $\mathfrak{H}_{05}^{-1}$ &$\to$ & $\mathfrak{H}_{02}$&
$E^t_1=  e_2 $& 
$E^t_2=  e_1 - e_3 $&
$E^t_3= -t e_3 $\\&&& 
$E^t_4= e_4$& 
$E^t_5= e_5$\\

\hline $\mathfrak{H}_{05}^{1}$ &$\to$ & $\mathfrak{H}_{03}^{1}$&
$E^t_1= e_2 $&
$E^t_2=  e_3 $& 
$E^t_3= t e_1$\\
&&&
$E^t_4= e_4 $& 
$E^t_5= e_5 $ \\

\hline $\mathfrak{H}_{05}^{\lambda\neq\pm 1}$ &$\to$ & $\mathfrak{H}_{06}$&
$E^t_1= te_1-\frac{1}{1-\lambda}e_3 $&
\multicolumn{2}{l}{$E^t_2=  \frac{1}{(1-\lambda)t}e_1+e_2-\frac{1}{(1-\lambda)^2(1+\lambda)t^2}e_3 $}\\ 
&&&
$E^t_3= -\lambda t e_1+\frac{1}{1-\lambda}e_3$&
$E^t_4= e_4 $& 
$E^t_5= e_5 $ \\

\hline $\mathfrak{H}_{06}$ &$\to$ & $\mathfrak{H}_{02}$&
$E^t_1= - e_1 + e_3$&
$E^t_2=  e_2 + e_3 $&
$E^t_3= t e_3$\\&&&
$E^t_4= e_4 $& 
$E^t_5= e_5$\\

\hline $\mathfrak{H}_{06}$ &$\to$ & $\mathfrak{H}_{03}^\lambda$&
$E^t_1=  e_1 + \lambda e_3 $&
$E^t_2=  e_2 $& $E^t_3= t e_3 $\\
&&&
$E^t_4= e_4 $&
$E^t_5= e_5$\\

\hline $\mathfrak{H}_{07}$ &$\to$ & $\mathfrak{H}_{05}^1$&
$E^t_1= ie_2 $&
$E^t_2=  \frac{1}{t}e_1 $&
$E^t_3= te_3$\\
&&&
$E^t_4= e_4 $&
$E^t_5= e_5$\\

\hline $\mathfrak{H}_{07}$ &$\to$ & $\mathfrak{H}_{06}$&
$E^t_1= \frac{t}{4}e_3 $&
\multicolumn{2}{l}{$E^t_2= \frac{2}{t}e_1 + \frac{2}{t}e_2 + \frac{1}{t} e_3 $}\\
&&&$E^t_3= -te_2 - \frac{3t}{4}e_3 $&
$E^t_4= e_4$&
$E^t_5= e_5$\\

\hline $\mathfrak{H}_{08}$ &$\to$ & $\mathfrak{H}_{10}$&
$E^t_1= i e_3 $&
$E^t_2= -te_1+te_2 $&
$E^t_3= \frac{1}{2t}e_4 $\\ 
&&&
$E^t_4= te_1+te_2 $&
$E^t_5= e_5  $\\

\hline $\mathfrak{H}_{08}$ &$\to$ & $\mathfrak{H}_{12}$&
$E^t_1= 2 t e_3 $&
$E^t_2= \frac{1}{2t} e_2 + t e_4 $&
$E^t_3= -2it^2 e_4 $\\
&&&
$E^t_4= -\frac{i}{2t^2} e_1 + ie_3 $&
$E^t_5= e_5  $\\  

\hline $\mathfrak{H}_{09}^{\lambda\not=1}$ &$\to$ & $\mathfrak{H}_{10}$&
\multicolumn{2}{l}{$E^t_1= e_1+\frac{1}{1+\lambda}e_3+\frac{\lambda}{(1-\lambda)(1+\lambda)^2}e_4 $}&
$E^t_2= te_3 $\\ 
& &&
\multicolumn{3}{l}{$E^t_3= \frac{1}{t}e_2-\frac{1}{(1-\lambda)(1+\lambda)^2t}e_3+\frac{1}{(1-\lambda)(1+\lambda)^3t}e_4$}\\
&&& $E^t_4= -\lambda te_3+te_4 $&
$E^t_5= e_5  $\\

\hline $\mathfrak{H}_{09}^\lambda$ &$\to$ & $\mathfrak{H}_{13}^{\lambda;\lambda}$&
$E^t_1= \frac{1}{t}e_1 $&
$E^t_2= te_3 $&
$E^t_3= e_2$\\ 
&&&
$E^t_4= e_4 $&
$E^t_5= e_5  $\\

\hline $\mathfrak{H}_{09}^1$ &$\to$ & $\mathfrak{H}_{07}$&
$E^t_1= \frac{1}{2}e_4 $&
\multicolumn{2}{l}{$E^t_2= \frac{1}{2}e_1-e_3+\frac{1}{2}e_4 $}\\
&&&
$E^t_3= 2e_2$ &
$E^t_4= te_3 $&
$E^t_5= e_5  $\\

\hline $\mathfrak{H}_{10}$ &$\to$ & $\mathfrak{H}_{04}$&
$E^t_1=  e_1$&
$E^t_2=-e_2 +e_4 $&
$E^t_3= e_2+e_3$\\
&&&
$E^t_4= te_4$& 
$E^t_5= e_5$\\

\hline $\mathfrak{H}_{10}$ &$\to$ & $\mathfrak{H}_{05}^\lambda$&
$E^t_1= e_1$&
$E^t_2=  e_2 + \lambda  e_4$&
$E^t_3= e_3 $\\&&&
$E^t_4= t e_4 $&
$E^t_5= e_5$\\ 

\hline $\mathfrak{H}_{10}$ &$\to$ & $\mathfrak{H}_{07}^\lambda$&
$E^t_1= e_2+e_4$&
\multicolumn{2}{l}{$E^t_2=  ie_1-e_2 + e_4$}\\
&&& $E^t_3= e_3 $&
$E^t_4= t e_4 $&
$E^t_5= e_5$\\ 

\hline $\mathfrak{H}_{11}$ &$\to$ & $\mathfrak{H}_{09}^1$&
$E^t_1=  -2te_1$&
$E^t_2=te_4$\\
&&&
\multicolumn{2}{l}{$E^t_3=-\frac{1}{2t}e_1+\frac{1}{2t}e_2-\frac{1}{2t}e_3$}\\&&&
$E^t_4= \frac{1}{t}e_2$& 
$E^t_5= e_5$\\ 

\hline $\mathfrak{H}_{11}$ &$\to$ & $\mathfrak{H}_{10}$&
$E^t_1=  e_1$&
\multicolumn{2}{l}{$E^t_2=te_2-te_3$}\\
&&&
$E^t_3=\frac{1}{2t}e_4$&
$E^t_4= te_2+te_3$& 
$E^t_5= e_5$\\

\hline $\mathfrak{H}_{12}$ &$\to$ & $\mathfrak{H}_{04}$&
$E^t_1=  e_2 $&
$E^t_2= e_3 $&
$E^t_3= e_4 $\\
&&&
$E^t_4= t  e_1 $&
$E^t_5= e_5$\\

\hline $\mathfrak{H}_{12}$ &$\to$ & $\mathfrak{H}_{14}^{-1}$&
$E^t_1= \frac{1}{t}e_1 $&
$E^t_2= - te_2 $&
$E^t_3= e_3 $\\
&&&
$E^t_4= e_4 $&
$E^t_5= e_5$\\ 

\hline $\mathfrak{H}_{13}^{\lambda;-1}$ &$\to$ & $\mathfrak{H}_{04}^{\lambda}$&
\multicolumn{2}{l}{$E^t_1=  \frac{1-\lambda}{1+\lambda}e_2+e_3 + \frac{1}{1 + \lambda} e_4$}&
$E^t_2= -ie_2$\\
&&&\multicolumn{2}{l}{$E^t_3= ie_1-\frac{(1-\lambda)i}{1+\lambda}e_2-ie_3+\frac{i}{1+\lambda}e_4 $}\\
&&&$E^t_4= t e_4 $&
$E^t_5= e_5$\\

\hline $\mathfrak{H}_{13}^{\lambda;\lambda}$ &$\to$ & $\mathfrak{H}_{05}^{\lambda}$&
$E^t_1=  e_3 + \frac{1}{1 + \lambda} e_4$&
$E^t_2= e_1$&
$E^t_3= e_2 $\\
&&&
$E^t_4= t e_4 $&
$E^t_5= e_5$\\

\hline $\mathfrak{H}_{13}^{\lambda\neq -1;\mu\neq -1,\lambda}$ &$\to$ & $\mathfrak{H}_{10}$&
\multicolumn{2}{l}{$E^t_1=  e_1+\frac{1}{1+\mu}e_2+\frac{1-\mu}{(\lambda-\mu)(1-\lambda\mu)t}e_4$}&
$E^t_2= te_2+\frac{1}{\lambda-\mu}e_4$ \\
&&&
\multicolumn{3}{l}{$E^t_3= \frac{1}{(\lambda-\mu)t}e_1-\frac{1}{(1+\mu)(1-\lambda\mu)t}e_2+e_3+\frac{1}{(1+\lambda)(\mu-\lambda)(1-\lambda\mu)t^2}e_4 $}\\
&&&
$E^t_4= -\lambda t e_2+\frac{\mu}{\mu-\lambda}e_4 $&
$E^t_5= e_5$\\

\hline $\mathfrak{H}_{14}^{-1}$ &$\to$ & $\mathfrak{H}_{05}^{-1}$&
$E^t_1= e_4 $&
$E^t_2=  e_1$ &$E^t_3= e_2$\\
&&&
$E^t_4= t e_3$&
$E^t_5= e_5$\\

\hline $\mathfrak{H}_{14}^{\lambda\neq -1}$ &$\to$ & $\mathfrak{H}_{10}$&
\multicolumn{2}{l}{$E^t_1= \frac{2}{(1+\lambda)^2t}e_2+\frac{1-\lambda}{1+\lambda}e_3+e_4 $} &$E^t_2=  \frac{1}{1+\lambda}e_2+te_3$\\
&&& 
\multicolumn{2}{l}{$E^t_3= e_1-\frac{1}{(1+\lambda)^3t^2}e_2-\frac{1}{(1+\lambda)t}e_4$}\\
&&&
$E^t_4= e_2$&
$E^t_5= e_5$\\

\hline $\mathfrak{H}_{14}^{\lambda}$ &$\to$ & $\mathfrak{H}_{13}^{\lambda;-1}$&
$E^t_1= te_4 $&
$E^t_2=  \frac{1}{t}e_3$&
$E^t_3= e_1$\\&&&
$E^t_4= e_2$&
$E^t_5= e_5$\\

\hline $\mathfrak{H}_{15}$ &$\to$ & $\mathfrak{H}_{13}^{0;0}$&
$E^t_1= \frac{1}{t}e_1 $&
$E^t_2=  te_2$&
$E^t_3= e_3$\\ &&&
$E^t_4=  e_4$&
$E^t_5= e_5$\\

\hline $\mathfrak{H}_{15}$ &$\to$ & $\mathfrak{H}_{10}$&
$E^t_1= e_1+e_2$&
$E^t_2=  te_1$\\
&&&
$E^t_3= \frac{1}{t}e_2-\frac{1}{t}e_3-\frac{1}{t}e_4$ &$E^t_4=  -te_4$&
$E^t_5= e_5$\\
 \end{longtable}

All primary non-degenerations are proved in the following table:
\begin{longtable}{|rcl|l|}
\hline
\multicolumn{3}{|c|}{$\textrm{Non-degeneration}$} & \multicolumn{1}{|c|}{$\textrm{Arguments}$}\\
\hline
\hline
$\mathfrak{H}_{08}$ &$\not\to$ & $\mathfrak{H}_{13}^{\lambda\not=-1;-1}, \mathfrak{H}_{13}^{\lambda;\lambda\not=-1}$&
$\begin{array}{l}{\mathcal R}=\left\{\begin{array}{l}
c_{i,j}^k=0\mbox{ if $i=5$, $j=5$ or $k\not=5$;}\\
c^5_{2,4}=c^5_{3,3}=c^5_{3,4}=c^5_{4,2}=c^5_{4,3}=c^5_{4,4}\\
\multicolumn{1}{r}{=c^5_{4,1}+c^5_{1,4}=c^5_{3,2}+c^5_{2,3}=0}
\end{array}  \right\}\\
e_4,e_3,e_2,e_1,e_5\end{array}$ \\
\hline
$\mathfrak{H}_{09}^{\lambda}$ &$\not\to$ & $\mathfrak{H}_{13}^{\mu;-1}, \mathfrak{H}_{13}^{\mu;\mu\not=\lambda}$&
$\begin{array}{l}{\mathcal R}=\left\{\begin{array}{l}
c_{i,j}^k=0\mbox{ if $i=5$, $j=5$ or $k\not=5$;}\\
c^5_{2,4}=c^5_{3,3}=c^5_{3,4}=c^5_{4,2}=c^5_{4,3}=c^5_{4,4}\\
\multicolumn{1}{r}{=c^5_{4,1}-\lambda c^5_{1,4}=c^5_{3,2}-\lambda c^5_{2,3}=0}
\end{array}  \right\}\\
e_2,e_1,e_3,e_4,e_5\end{array}$ \\
\hline
$\mathfrak{H}_{09}^1$ &$\not\to$ & $\mathfrak{H}_{05}^{\lambda\not=1}$
&
$\begin{array}{l}{\mathcal R}=\left\{\begin{array}{l}
c_{i,j}^k=0\mbox{ if $i=5$, $j=5$ or $k\not=5$;}\\
c^5_{2,4}=c^5_{3,3}=c^5_{3,4}=c^5_{4,2}=c^5_{4,3}=c^5_{4,4}\\
\multicolumn{1}{r}{=c^5_{3,1}-c^5_{1,3}=c^5_{4,1}-c^5_{1,4}=c^5_{3,2}-c^5_{2,3}=0}
\end{array}  \right\}\\
e_2,e_3,e_1,e_4,e_5\end{array}$ \\
\hline
$\mathfrak{H}_{11}$ &$\not\to$ & $\mathfrak{H}_{13}^{\lambda;-1}, \mathfrak{H}_{13}^{\lambda;\lambda\not=1}
$&
$\begin{array}{l}{\mathcal R}=\left\{\begin{array}{l}
c_{i,j}^k=0\mbox{ if $i=5$, $j=5$ or $k\not=5$;}\\
c^5_{2,4}=c^5_{3,3}=c^5_{3,4}=c^5_{4,2}=c^5_{4,3}=c^5_{4,4}\\
\multicolumn{1}{r}{=c^5_{4,1}-c^5_{1,4}=c^5_{3,2}-c^5_{2,3}=0}
\end{array}  \right\}\\
e_4,e_1,e_1+ie_3,e_2,e_5\end{array}$ \\
\hline
$\mathfrak{H}_{12},\mathfrak{H}_{13}^{\lambda;-1}$ &$\not\to$ & $\mathfrak{H}_{05}^{\lambda\not=-1}$&
$Z_{-1}\big(\mathfrak{H}_{12}\big),Z_{-1}\big(\mathfrak{H}_{13}^{\lambda;-1}\big)>Z_{-1}\big(\mathfrak{H}_{05}^{\lambda}\big)$ \\
\hline
$\mathfrak{H}_{13}^{-1;-1}$ &$\not\to$ & $\mathfrak{H}_{01}$&
$\mathfrak{H}_{13}^{-1;-1}$ is antisymmetric \\
\hline
$\mathfrak{H}_{13}^{1;1}$ &$\not\to$ & $\mathfrak{H}_{03}^{\lambda\not=1}$&
$\mathfrak{H}_{13}^{1,1}$ is symmetric \\
\hline
$\mathfrak{H}_{13}^{\lambda;\lambda}$ &$\not\to$ & $\begin{array}{l}\mathfrak{H}_{05}^{\mu}\\
\mu\not=\lambda\end{array}$&
$Z_{\lambda}\big(\mathfrak{H}_{13}^{\lambda;\lambda}\big)>Z_{\lambda}\big(\mathfrak{H}_{05}^\mu\big)$ \\
\hline
$ \mathfrak{H}_{13}^{\lambda\neq -1;\mu\neq -1,\lambda}
 $ &$\not\to$ & $\mathfrak{H}_{13}^{\tau;-1},\mathfrak{H}_{13}^{\tau;\tau}$&
$\begin{array}{l}{\mathcal R}=\left\{\begin{array}{l}
c_{i,j}^k=0\mbox{ if $i=5$, $j=5$ or $k\not=5$;}\\
c^5_{2,4}=c^5_{3,3}=c^5_{3,4}=c^5_{4,2}=c^5_{4,3}=c^5_{4,4}\\
\multicolumn{1}{r}{=c^5_{4,1}-\lambda c^5_{1,4}=c^5_{3,2}-\mu c^5_{2,3}=0}
\end{array}  \right\}\\
e_3,e_1,e_2,e_4,e_5\end{array}$ \\
\hline
$\mathfrak{H}_{14}^{-1}$ &$\not\to$ & $\begin{array}{l}\mathfrak{H}_{03}^{\lambda}\\
\lambda\not=-1\end{array}$&
$Z_{-1}\big(\mathfrak{H}_{14}^{-1}\big)>Z_{-1}\big(\mathfrak{H}_{03}^{\lambda}\big)$ \\
\hline
$\mathfrak{H}_{14}^{\lambda\not=-1}$ &$\not\to$ & $\mathfrak{H}_{13}^{\mu\not=\lambda;-1}
,\mathfrak{H}_{13}^{\mu;\mu}$&
$\begin{array}{l}{\mathcal R}=\left\{\begin{array}{l}
c_{i,j}^k=0\mbox{ if $i=5$, $j=5$ or $k\not=5$;}\\
c^5_{2,4}=c^5_{3,3}=c^5_{3,4}=c^5_{4,2}=c^5_{4,3}=c^5_{4,4}\\
\multicolumn{1}{r}{=c^5_{4,1}-\lambda c^5_{1,4}=c^5_{3,2}+ c^5_{2,3}=0}
\end{array}  \right\}\\
e_1,e_4,e_3,e_2,e_5\end{array}$ \\
\hline
$\mathfrak{H}_{15}$ &$\not\to$ & $\mathfrak{H}_{13}^{\lambda;-1},\mathfrak{H}_{13}^{\lambda;\lambda\not=0}$&
$\begin{array}{l}{\mathcal R}=\left\{\begin{array}{l}
c_{i,j}^k=0\mbox{ if $i=5$, $j=5$ or $k\not=5$;}\\
c^5_{2,4}=c^5_{3,3}=c^5_{3,4}=c^5_{4,2}=c^5_{4,3}\\
\multicolumn{1}{r}{=c^5_{4,4}=c^5_{4,1}=c^5_{3,2}=0}
\end{array}  \right\}\\
e_3,e_1,e_2,e_4,e_5\end{array}$ \\
\hline
\end{longtable}

\end{proof}

\subsection*{The algebraic classification  of four dimensional anticommutative $3$-ary  algebras ($\mathfrak{ACom}^3_4$)}
According to the method of classification of $n \times n$ matrices under congruence \cite{hs08} 
and observations from \cite{fil85},
there are only $15$ types of nonzero complex four dimensional anticommutative $3$-ary  algebras.
Let us fix the basis $e_1,e_2,e_3,e_4$ of $\bf V$. 
Any structure $\mu\in\mathfrak{ACom}^3_4$ with structure constants $c_{i_1,i_2,i_3}^j$ ($1\le i_k,j\le 4$) is determined by the $4\times 4$ matrix $A^{\mu}$ whose $(i,j)$-entry is $(-1)^{i-1}c_{i_1,i_2,i_3}^j$, where $(i_1,i_2,i_3)$ is a unique triple of numbers such that $i_1,i_2,i_3\in\{1,2,3,4\}\setminus \{i\}$ and $i_1<i_2<i_3$.

Summarizing, the table below presents an algebraic classification of algebras from $\mathfrak{ACom}^3_4:$

\begin{longtable}{l|l lllllll}
\hline
$\mathfrak{H}$ &  \multicolumn{7}{l}{{Multiplication table}}  \\
\hline

$\mathfrak{H}_{01}$ &  $[e_2,e_3,e_4] = e_1$ \\ 
\hline
$\mathfrak{H}_{02}$ &  $[e_1,e_3,e_4] = -e_1-e_2$ & $[e_2,e_3,e_4] = -e_2$ \\ 
\hline
$\mathfrak{H}_{03}^\lambda$ & $[e_1,e_3,e_4] = -\lambda  e_1$ &  $[e_2,e_3,e_4] = e_2$ \\ 
\hline
$\mathfrak{H}_{04}$& $[e_1,e_2,e_4] = e_2+ e_3$  &  $[e_1,e_3,e_4] = e_3$ &  $[e_2,e_3,e_4] = e_1$\\ 
\hline
$\mathfrak{H}_{05}^\lambda$  & $[e_1,e_2,e_4] = \lambda e_2$ & $[e_1,e_3,e_4] = -e_3$ &  $[e_2,e_3,e_4] = e_1$\\ 
\hline
$\mathfrak{H}_{06}$ & $[e_1,e_3,e_4] = -e_3$&  $[e_2,e_3,e_4] = e_2$  \\ 
\hline
$\mathfrak{H}_{07}$ & $[e_1,e_2,e_4] = -e_1-e_2$ & $[e_1,e_3,e_4] = e_2+e_3$&  $[e_2,e_3,e_4] = e_3$\\

\hline
$\mathfrak{H}_{08}$ & 

$[e_1,e_2,e_3] = -e_1-e_2$  & 
$[e_1,e_2,e_4] = -e_2-e_3$  \\& 
$[e_1,e_3,e_4] = -e_3-e_4$  & 
$[e_2,e_3,e_4] = -e_4$  \\

\hline
$\mathfrak{H}_{09}^{\lambda\neq-1,0}$ & 

$[e_1,e_2,e_3] = -\lambda e_2$ &
$[e_1,e_2,e_4] =\lambda  e_1+ e_2$ \\&
$[e_1,e_3,e_4] = -e_4$  & 
$[e_2,e_3,e_4] = e_3$&\\

\hline
$\mathfrak{H}_{10}$ & 
  $[e_1,e_2,e_4] =  e_4$& 
   $[e_1,e_3,e_4] = -e_3$ &
   $[e_2,e_3,e_4] = e_1$\\

\hline
$\mathfrak{H}_{11}$ & 

$[e_1,e_2,e_3] = -e_2-e_3$ & 
$[e_1,e_2,e_4] = -e_3-e_4$  \\&
$[e_1,e_3,e_4] = -e_4$ & 
$[e_2,e_3,e_4] = e_1$ \\ 

\hline
$\mathfrak{H}_{12}$ & 

$[e_1,e_2,e_3] = -e_3-e_4$&
$[e_1,e_2,e_4] = -e_4$\\& 
$[e_1,e_3,e_4] = -e_1 -e_2$ & 
$[e_2,e_3,e_4] = -e_2$ \\

\hline
$\mathfrak{H}_{13}^{\lambda;\mu}$ & 
 
$[e_1,e_2,e_3] = -\lambda e_3$&
$[e_1,e_2,e_4] = e_4$ \\& 
$[e_1,e_3,e_4] = -\mu e_1$ & 
$[e_2,e_3,e_4] = e_2$ \\

\hline
$\mathfrak{H}_{14}^{\lambda}$ & 
$[e_1,e_2,e_3] = -e_3-e_4$ &
$[e_1,e_2,e_4] = -e_4$  \\ &
$[e_1,e_3,e_4] =-\lambda  e_1$& 
$[e_2,e_3,e_4] = e_2$ \\

\hline
$\mathfrak{H}_{15}$ & 
$[e_1,e_2,e_4] = e_4$&
$[e_1,e_3,e_4] = -e_3$& 
$[e_2,e_3,e_4] = e_2$\\ 
\hline

\end{longtable}
 
where the rest of nonzero multiplications can be obtained by the anticommutative property, $\lambda,\mu\in\mathbb{C}_{|\cdot|\le 1}$ and $\lambda-\mu\in \mathbb{C}_{\ge 0}$.

\begin{corollary}
The graph of degenerations of four-dimensional anticommutative $3$-ary algebras is given in Theorem \ref{theorem}.
\end{corollary}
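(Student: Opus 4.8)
The plan is to deduce the corollary directly from Theorem \ref{theorem} together with the variety-theoretic equivalence recalled in the subsection ``Noncommutative Heisenberg algebras, anticommutative $n$-ary algebras and matrices''. Concretely, for $n=4$ one has a chain of identifications: the variety $\mathfrak{ACom}^3_4$ of four-dimensional anticommutative $3$-ary algebras, the variety of $4\times 4$ complex matrices modulo congruence, and the variety of five-dimensional noncommutative Heisenberg algebras. First I would recall that the assignment $\mu\mapsto A^\mu$ described above is an isomorphism of affine varieties that intertwines the change-of-basis action of ${\rm GL}({\bf V})$ on $\mathfrak{ACom}^3_4$ with the congruence action of ${\rm GL}_4$ on $4\times 4$ matrices (this is \cite{fil85}); being ${\rm GL}$-equivariant, it sends orbits to orbits and Zariski closures of orbits to Zariski closures of orbits. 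Hence, for ${\bf A},{\bf B}\in\mathfrak{ACom}^3_4$, the degeneration ${\bf A}\to{\bf B}$ holds if and only if $A^{\bf B}$ lies in the congruence-closure of the orbit of $A^{\bf A}$.

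Second, I would invoke the discussion around \cite[Section 4]{IKV}: the variety of $4\times 4$ matrices sits inside the variety of five-dimensional noncommutative Heisenberg algebras as a closed, ${\rm GL}_4$-stable subvariety that meets every orbit under the ${\rm GL}_5$-action, and this inclusion respects orbit closures. Consequently the degeneration order among five-dimensional noncommutative Heisenberg algebras is detected on, and restricts to, the matrix subvariety. Composing the two equivalences yields an order isomorphism between the poset of isomorphism classes of $\mathfrak{ACom}^3_4$ ordered by degeneration and the corresponding poset for five-dimensional noncommutative Heisenberg algebras.

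Third, I would make the identification of representatives explicit. The deliberately parallel notation $\mathfrak{H}_{01},\dots,\mathfrak{H}_{15}$ (with parameters $\lambda,\mu\in\mathbb{C}_{|\cdot|\le 1}$ and $\lambda-\mu\in\mathbb{C}_{\ge 0}$) in the two classification tables is arranged precisely so that, under the above chain, the $\mathfrak{ACom}^3_4$-algebra $\mathfrak{H}_{ij}$ corresponds to the five-dimensional noncommutative Heisenberg algebra $\mathfrak{H}_{ij}$; this amounts to checking, for each of the sixteen matrix types in the list, that the matrix attached to $\mathfrak{H}_{ij}$ as an element of $\mathfrak{ACom}^3_4$ (via the sign convention $(-1)^{i-1}c^{\,j}_{i_1,i_2,i_3}$) is congruent to the matrix attached to $\mathfrak{H}_{ij}$ as a noncommutative Heisenberg algebra --- a finite, routine verification, with the parameter normalizations preserved by construction. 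With these matchings in hand, Theorem \ref{theorem} transports verbatim: each arrow $\mathfrak{H}_{ij}\to\mathfrak{H}_{kl}$ there is a valid degeneration in $\mathfrak{ACom}^3_4$, and there are no further proper degenerations.

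The main --- and essentially only --- obstacle is bookkeeping rather than mathematics: one must make sure that the sign and transpose conventions in the two matrix dictionaries are compatible, so that the shared labels $\mathfrak{H}_{ij}$ genuinely name corresponding algebras, and that the normalizations $\mathbb{C}_{|\cdot|\le 1}$ and $\mathbb{C}_{\ge 0}$ are matched on both sides. Once the dictionary is pinned down, nothing further is required: since the variety isomorphisms preserve orbit closures, no new degenerations or non-degenerations have to be established, and the graph of Theorem \ref{theorem} is exactly the graph of degenerations of $\mathfrak{ACom}^3_4$.
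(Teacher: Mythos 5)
Your proposal is correct and follows exactly the route the paper itself takes (and leaves implicit): the corollary is an immediate consequence of Theorem \ref{theorem} combined with the orbit-closure-preserving equivalence between $n\times n$ matrices up to congruence, $(n+1)$-dimensional noncommutative Heisenberg algebras, and $n$-dimensional anticommutative $(n-1)$-ary algebras recalled from \cite{fil85} and \cite[Section 4]{IKV}, together with the matching of representatives in the two classification tables. No further comment is needed.
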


\end{document}